\newtheorem{theorem}{Theorem}[section]
\newtheorem{lemma}[theorem]{Lemma}
\newtheorem{conjecture}[theorem]{Conjecture}
\theoremstyle{definition}
\theoremstyle{remark}
\numberwithin{equation}{section}
\DeclareMathOperator{\rank}{rank}
\title{On a conjecture of Stolz in the toric case}
\author{Michael Wiemeler}
\address{
Mathematisches Institut\\ Universit\"at M\"unster\\Einsteinstrasse 62\\D-48149 M\"unster\\Germany
}
\email{wiemelerm@uni-muenster.de}
 \thanks{The research for this paper was funded by the Deutsche Forschungsgemeinschaft (DFG, German Research Foundation) under Germany's Excellence Strategy EXC 2044 –390685587, Mathematics M\"unster: Dynamics–Geometry–Structure and through CRC1442 Geometry: Deformations and Rigidity at University of M\"unster.}
\subjclass[2020]{58J26, 57S12, 14J45}
\keywords{Witten genus, torus manifolds, Fano manifolds}
\date{\today}
\begin{document}
\begin{abstract}
  In 1996 Stolz conjectured that a string manifold with positive Ricci curvature has vanishing Witten genus.
  Here we prove this conjecture for toric string Fano manifolds and for string torus manifolds admitting invariant metrics of non-negative sectional curvature.
\end{abstract}

\maketitle

\section{Introduction}

For a simply connected closed manifold \(M\) with positive scalar curvature there is no known obstruction to \(M\) admitting a metric of positive Ricci curvature.
However from a constructive point of view it is much easier to construct metrics of positive scalar curvature than metrics of positive Ricci curvature.

Given this situation Stolz conjectured in \cite{MR1380455} that the Witten genus of a closed string manifold admitting a metric of positive Ricci curvature vanishes.
He motivated this conjecture by an heuristic argument involving the free loop space of the manifold.
His conjecture has been verified in several special cases, e.g. for string complete intersections in complex projective space by Landweber and Stong \cite[p. 87]{MR1189136}  or more generally in certain hermitian symmetric spaces by F\"orster \cite{foerster-phd}, for string homogeneous spaces and for string cohomogeneity-one manifolds by Dessai \cite{MR2581907}.
Note here that in the case of string homogeneous spaces the conjecture has independently been proven by H\"ohn (unpublished).

A string manifold \(M\) is a spin manifold with \(\frac{p_1}{2}(M)=0\).
A spin manifold is an oriented manifold with vanishing second Stiefel-Whitney class.
The Witten genus is a ring homomorphism \(\Omega^{SO}\rightarrow \mathbb{Q}[[q]]\) where \(\Omega^{SO}\) denotes the oriented bordism ring.

Here we consider torus manifolds and verify the conjecture in special cases.
A torus manifold is a closed connected orientable manifold of real dimension \(2n\) with an effective action of an \(n\)-dimensional compact torus \(T\) such that there are \(T\)-fixed points in \(M\).
Moreover, we call a complex manifold toric if it is biholomorphic to a smooth complex toric variety.
For an overview about the properties of these manifolds and varieties see \cite{MR3363157} and \cite{MR2810322}.

By Yau's solution \cite{MR480350}  of the Calabi conjecture a closed connected Kähler manifold with positive first Chern class admits a Kähler metric of positive Ricci curvature. Such Kähler manifolds are called Fano manifolds.

For the case of toric Fano manifolds we prove the following theorem which implies the Stolz conjecture in this case.

\begin{theorem}
  \label{sec:introduction}
  Let \(M\) be a toric string Fano manifold of complex dimension \(n\).
  Then \(M\) is biholomorphic to \(\prod_{i=1}^n \mathbb{C} P^1\).
\end{theorem}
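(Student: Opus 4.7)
The plan is to encode the hypotheses combinatorially on the fan $\Sigma$ of $M$ and then show they force $\Sigma$ to be the product fan of $n$ copies of $\mathbb{C} P^1$. Let $v_1,\dots,v_d\in N$ denote the primitive generators of the rays of $\Sigma$ and $D_1,\dots,D_d$ the corresponding invariant prime divisors. Since $M$ is a smooth toric variety, $c(TM)=\prod_i(1+D_i)$, and therefore $c_1(M)=\sum_i D_i$ and $p_1(M)=c_1^2-2c_2=\sum_i D_i^2$. As the integral cohomology of a smooth complete toric variety is torsion-free, the spin hypothesis becomes the existence of an $m\in \Hom(N,\mathbb{Z})$ with $\langle m,v_i\rangle$ odd for every $i$, and the string hypothesis becomes the identity $\sum_i D_i^2=0$ in $H^4(M;\mathbb{Z})$.

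First I would use spin to force pseudo-index at least two. For any primitive collection $\mathcal{P}=\{v_{i_1},\dots,v_{i_k}\}$ in the sense of Batyrev, with primitive relation $\sum_j v_{i_j}=\sum_u c_u u$, pairing with $m$ yields $k\equiv\sum_u c_u\pmod 2$, so the degree $k-\sum_u c_u$ is even; Fano positivity upgrades it to $\geq 2$. In particular every torus-invariant curve $C$ satisfies $-K_M\cdot C\geq 2$, and every wall relation $u_1+u_2=\sum_{v\in\tau}b_v v$ has $\sum_v b_v\leq 0$. Next I would exploit the string identity $\sum_i D_i^2=0$ by pairing it against codimension-two monomials $D_{j_1}\cdots D_{j_{n-2}}$ indexed by cones $\tau\in\Sigma(n-2)$, and using the wall-intersection formulas ($D_{u_\ell}\cdot C_\sigma=1$ for the two rays outside the wall, $D_v\cdot C_\sigma=-b_v$ for $v$ inside) to convert each pairing into a linear identity in the wall coefficients $b_v$. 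Combined with the bounds from the spin step, these identities should force $\sum_v b_v=0$ at every wall, and more strongly force every primitive relation into the trivial length-two form $v+v'=0$.

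The conclusion then follows from a structural theorem of Batyrev: a smooth toric Fano manifold all of whose primitive relations are of the form $v+v'=0$ is a product of projective lines, since the antipodal pairs partition the rays and decompose both $N$ and $\Sigma$ as direct sums of rank-one data. I foresee the principal obstacle in the second step above: extracting from the single cohomological identity $\sum_i D_i^2=0$ enough independent wall-by-wall constraints to exclude all primitive collections of length $\geq 3$ will require careful bookkeeping of the contributions $D_i^2\cdot D_{j_1}\cdots D_{j_{n-2}}$ according to whether $v_i$ lies among the $v_{j_k}$, outside but adjacent to their cone, or disjoint from them, together with using the pseudo-index bound to close the inequalities in the correct direction.
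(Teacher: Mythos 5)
Your route is genuinely different from the paper's: the paper is ``top-down,'' citing a lower bound \(b_2(M)\ge n\) for string quasitoric manifolds (from \cite{MR3080634}) together with Casagrande's proof of the toric Mukai conjecture, and the elementary observation that spin forces \(\iota_M\ge 2\); the chain \(n\le n(\iota_M-1)\le b_2(M)(\iota_M-1)=\rho_M(\iota_M-1)\le n\) then collapses and the rigidity part of Mukai finishes. Your spin step is essentially the paper's Lemma \ref{sec:proof-theor-refs-1}, merely re-expressed through primitive relations, and your proposed endgame (Batyrev's splitting criterion when all primitive relations are antipodal pairs) would substitute for Mukai rigidity. What you gain, if it works, is a self-contained fan-theoretic argument that bypasses both the Betti-number bound and Casagrande's theorem.

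But the central step is not carried out, and you say so yourself. The claim that the single relation \(\sum_i D_i^2=0\) in \(H^4(M;\mathbb{Z})\), paired against all \((n-2)\)-cones, forces every wall relation to have \(\sum_v b_v=0\) \emph{and} excludes all primitive collections of length \(\ge 3\), is precisely the analogue of the hard input the paper outsources to \cite{MR3080634} and \cite{MR2228683}; asserting that ``these identities should force'' the conclusion is a placeholder, not a proof. There is also a technical mismatch worth flagging: the intersection formulas you quote (\(D_{u_\ell}\cdot C_\sigma=1\), \(D_v\cdot C_\sigma=-b_v\)) are for curve classes attached to walls \(\sigma\in\Sigma(n-1)\), whereas pairing a degree-four class against \(D_{j_1}\cdots D_{j_{n-2}}\) for \(\tau\in\Sigma(n-2)\) lands you on a toric surface; to reduce to the curve formulas you must iterate, writing \(D_i^2\cdot D_{j_1}\cdots D_{j_{n-2}}=D_i\cdot\bigl(D_i\cdot D_{j_1}\cdots D_{j_{n-2}}\bigr)\) and tracking which products give walls and which vanish. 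That bookkeeping, for all configurations of \(v_i\) relative to \(\tau\), is exactly where the argument would either close or fail, and it is left open. Until that computation is actually performed (or replaced by a citation doing equivalent work), the proof has a genuine gap at its crux.

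Two smaller points. First, the step from ``all primitive relations are \(v+v'=0\)'' to ``\(M\cong(\mathbb{P}^1)^n\)'' is correct but should be attributed (Batyrev's splitting-fan theorem, or equivalently Casagrande's rigidity at \(\iota=2,\ \rho=n\)); as written it is a bare assertion. Second, the translation of the spin condition into the existence of \(m\in\Hom(N,\mathbb{Z})\) with all \(\langle m,v_i\rangle\) odd is fine for a smooth complete toric variety, but you should say why such an \(m\) exists (it is the statement that \(\sum_i D_i\) is divisible by two in \(\mathrm{Pic}(M)\), lifted along the exact sequence \(0\to M\to \mathbb{Z}^d\to\mathrm{Pic}\to 0\)); again this is correct but skipped.
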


We also prove the following:

\begin{theorem}
  \label{sec:introduction-1}
  Let \(M\) be a string torus manifold with an invariant metric of non-negative sectional curvature. Then the Witten genus of \(M\) vanishes.
\end{theorem}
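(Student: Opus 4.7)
The plan is to combine the author's earlier structural classification of non-negatively curved torus manifolds with rigidity and vanishing results for the Witten genus.

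First I would invoke the classification of simply connected torus manifolds admitting an invariant metric of non-negative sectional curvature, which describes such a manifold $M$ (up to equivariant diffeomorphism) as a free quotient $Z/K$, where
\[
Z = \prod_{i} S^{2n_i} \times \prod_{j} S^{2m_j+1}
\]
is a product of round spheres under the standard action of a diagonal maximal torus $T_Z$, and $K \subset T_Z$ is a subtorus acting freely on $Z$. The non-simply-connected case reduces to the simply connected one after passing to the universal (finite) cover, since the Witten genus of a finite cover is a non-zero integer multiple of that of the quotient.

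Next, I would split into cases according to the sphere factors in $Z$. If $Z$ contains an odd-dimensional factor $S^{2m_j+1}$, the standard free $S^1$-rotation of this factor commutes with the $K$-action and therefore descends to a non-trivial $S^1$-action on $M$. Applying the rigidity theorem of Bott, Taubes, and Liu for the equivariant Witten genus on string manifolds with $S^1$-actions lifting to the string structure, together with the equivariant fixed-point formula, reduces the computation of $\varphi_W(M)$ to a sum of local contributions over the corresponding fixed-point set; this fixed-point set is either empty or itself a string manifold of lower dimension satisfying the same non-negativity assumption, so an inductive argument on dimension yields $\varphi_W(M) = 0$. In the complementary case, where $Z$ is a product of even-dimensional spheres only, I would argue that the string condition combined with the explicit form of the linear $T_Z$-weights on $Z$ forces each even factor of $Z$ to have dimension two. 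The tangent bundle of $M$ would then split (after stabilisation by a trivial summand) as a sum of oriented real rank-$2$ bundles, so all Pontryagin classes of $M$ vanish and $\varphi_W(M) = 0$ for trivial reasons.

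The main obstacle I expect is twofold. First, one must verify that the $S^1$- and $T$-actions on $M$ lift to the string bundle of $M$, which is a prerequisite for applying the Bott--Taubes--Liu rigidity theorem; this should follow from the simple connectivity of $M$ together with the fiber bundle structure $Z \to M$ provided by the classification, by tracking the equivariant first Pontryagin class through the quotient. Second, in the case where $Z$ contains only even-dimensional sphere factors, one must use the string condition on $M$ to rule out even sphere factors of dimension greater than two in $Z$; this combinatorial step is closely analogous to the argument in the toric Fano case of Theorem \ref{sec:introduction}, where one similarly shows that the string condition forces all factors to be $\mathbb{C}P^1$.
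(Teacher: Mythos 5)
Your initial reduction (pass to the universal cover using multiplicativity of the Witten genus in finite covers, then invoke the classification $M \cong Z/T^r$ with $Z$ a product of spheres) matches the paper. After that the approaches diverge, and your proposal has genuine gaps in both of its cases.

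The paper splits on whether $b_2(M) = r$ equals $n$ or is strictly less. When $b_2(M) = n$, the constraints of the classification force $Z = (S^3)^n$; the paper then cites Shen's theorem to identify $M$ with a Bott manifold, observes that Bott manifolds (iterated $\mathbb{C}P^1$-bundles) bound orientable manifolds because $\mathbb{C}P^1 = S^2$ does, and concludes by bordism invariance. When $b_2(M) < n$, the paper cites a specific vanishing theorem for the Witten genus of such torus manifolds. Neither branch invokes rigidity.

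Your split instead is on whether $Z$ contains an odd-dimensional sphere factor, and both branches have problems. In the odd-sphere branch you want to apply Liu's rigidity theorem plus the fixed point formula. But rigidity for the Witten genus requires an anomaly-cancellation hypothesis (vanishing of the equivariant $\tfrac{p_1}{2}$, i.e.\ an equivariant string structure), and simple connectivity of $M$ together with the fibration $Z \to M$ does not obviously produce this; the equivariant class can pick up a nonzero contribution from $H^4(BS^1)$, and you give no argument ruling this out. Moreover your proposed induction does not set up correctly: the fixed point formula expresses the equivariant genus as a sum over fixed components of local terms involving the normal bundle data, not simply as the Witten genera of the fixed components, and the fixed point set need not be a string torus manifold with non-negatively curved invariant metric. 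In the all-even branch your claim that the string condition forces every even sphere factor to have dimension two contradicts the classification itself, which requires every even sphere factor $S^{2m_i}$ in $Z$ to satisfy $m_i > 1$ (so dimension $\geq 4$); and indeed products such as $S^4 \times S^4$ are string torus manifolds with invariant non-negatively curved metrics, so the string condition cannot rule out larger even spheres. The subsequent assertion that a splitting into rank-$2$ oriented bundles kills all Pontryagin classes is also false, since $p_1$ of a rank-$2$ oriented bundle is the square of its Euler class. The all-even case is in fact trivial (when $r = 0$ one has $M = Z$, a product of bounding spheres), but your proposed route to it does not work.
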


To give some background on this theorem we note that by a result of Böhm--Wilking \cite{MR2346271} the Ricci flow on a manifold with finite fundamental group evolves a metric of non-negative sectional curvature to a metric of positive Ricci curvature.
Moreover, by their classification given in \cite{MR3355120} torus manifolds with invariant metrics of non-negative sectional curvature have finite fundamental groups.
Hence they have invariant metrics of positive Ricci curvature.

These theorems answer questions raised by Dessai \cite[Section 3]{MR2581907} in the toric case.
Dessai asked to check the Stolz conjecture for string Fano manifolds and for string manifolds admitting metrics of positive or non-negative sectional curvature.

Our proof of Theorem~\ref{sec:introduction} combines a lower bound on the second Betti-number \(b_2(M)\)  of a string quasitoric manifold \(M\) from \cite{MR3080634} with the Mukai conjecture which has been proved in the toric case by Casagrande \cite{MR2228683}.

Our proof of Theorem~\ref{sec:introduction-1} combines the classification of torus manifolds with invariant metrics of non-negative sectional curvature given in \cite{MR3355120}, a classification result for certain string quasitoric manifolds due to Shen \cite{shen22} and a vanishing result from \cite{MR3599868}.

Quasitoric manifolds form a special class of torus manifolds which includes non-singular projective toric varieties.
This class of manifolds has been introduced in \cite{zbMATH04212906}.
We note that by a conjecture of Bazaikin--Matvienko \cite{MR2364619}  every quasitoric manifold should have an invariant metric of positive Ricci curvature.
Combining this with the Stolz conjecture would imply that the Witten genus of any string quasitoric manifold vanishes.
This would also be implied by a conjecture of Liu \cite{MR1331972} who asked whether the Witten genus of any string manifold with an effective \(S^1\)-action vanishes.

This paper has two more sections.
In Section~\ref{sec:proof-theor-refs-2} we prove Theorem~\ref{sec:introduction}.
Then in Section~\ref{sec:proof-theor-refs-3} we prove Theorem~\ref{sec:introduction-1}.

\section{The proof of Theorem \ref{sec:introduction}}
\label{sec:proof-theor-refs-2}

In this section we prove Theorem~\ref{sec:introduction}. We start with some preparations.

A quasitoric manifold \(M\) is a locally standard torus manifold such that \(M/T\) is face-preserving homeomorphic to a simple convex polytope.
Here a \(2n\)-dimensional torus manifold is called locally standard if the torus action on it is locally modeled on the standard action of \(T\) on \(\mathbb{C}^n\).
In this case \(M/T\) is a nice manifold with corners.

Note that every toric Fano manifold is a non-singular projective toric variety and hence a quasitoric manifold.

We first have to recall a result on string quasitoric manifolds from \cite{MR3080634}.

\begin{lemma}
\label{sec:proof-theor-refs}
  Let \(M\) be a string quasitoric manifold of real dimension \(2n\). Then
  \[b_2(M)\geq n.\]
\end{lemma}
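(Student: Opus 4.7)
The plan is to translate the string condition into an algebraic identity in the cohomology ring of $M$ and then extract a combinatorial lower bound on the number of facets of the quotient polytope.

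I would begin from the Davis--Januszkiewicz presentation: if $P=M/T$ has facets $F_1,\dots,F_m$ with characteristic function $\lambda$, then
$$H^*(M;\mathbb{Q})\;\cong\;\mathbb{Q}[v_1,\dots,v_m]\big/\bigl(I_{SR}+J_\lambda\bigr),$$
where $v_i\in H^2(M;\mathbb{Z})$ is Poincar\'e dual to the characteristic submanifold $M_{F_i}$, $I_{SR}$ is the Stanley--Reisner ideal of $P$, and $J_\lambda$ is the linear ideal generated by the $n$ relations determined by $\lambda$. In particular $b_2(M)=m-n$, so the claim is equivalent to $m\geq 2n$. Since $c(M)=\prod_i(1+v_i)$ we have $p_1(M)=\sum_i v_i^2$; combined with $w_2(M)=0$ the string hypothesis then produces the key identity
$$\sum_{i=1}^m v_i^2\;=\;0\quad\text{in }H^4(M;\mathbb{Q}).$$

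The second move is to localise at a vertex. Fix a vertex $v_0\in P$ and, after reindexing, let $F_1,\dots,F_n$ be the facets meeting at $v_0$. Since $\lambda(F_1),\dots,\lambda(F_n)$ form a $\mathbb{Z}$-basis of the weight lattice, the relations in $J_\lambda$ can be solved to express $v_1,\dots,v_n$ as $\mathbb{Q}$-linear combinations of $v_{n+1},\dots,v_m$, so these latter $m-n$ classes form a basis of $H^2(M;\mathbb{Q})$. Substituting these expressions into the string identity yields a quadratic relation among the ``free'' classes $v_{n+1},\dots,v_m$ that must hold modulo $I_{SR}$.

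The heart of the argument, and the main obstacle, is to show that such a relation cannot hold when $m-n<n$. The only degree-$2$ monomials killed by $I_{SR}$ are products $v_jv_k$ corresponding to pairs of disjoint facets of $P$, and the simplicity of $P$ tightly restricts which pairs can be disjoint (any two facets sharing a vertex must intersect). The strategy would then be to argue that, if $m-n<n$, there are neither enough basis classes nor enough disjoint-pair monomials available to absorb the $n$ square contributions $v_1^2,\dots,v_n^2$ coming from the facets through $v_0$ after substitution. Making this precise requires a careful bookkeeping of the interaction between $I_{SR}$ and $J_\lambda$; I would expect the cleanest execution to proceed either by a direct combinatorial case analysis at $v_0$, or more conceptually by comparing Hilbert series of the relevant graded quotient rings. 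This combinatorial--algebraic interplay is where the geometry of $P$ genuinely enters and is the delicate part of the proof.
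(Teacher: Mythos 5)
Your reduction is set up correctly: for a quasitoric $M$ over the simple polytope $P$ with $m$ facets, $b_2(M)=m-n$, so the claim is indeed equivalent to $m\geq 2n$; and with the omniorientation Chern classes the string condition does give $w_2(M)=\sum v_i\equiv 0\pmod 2$ together with $p_1(M)=\sum v_i^2=0$ in $H^4(M;\mathbb{Z})$ (which here is torsion-free, so $\tfrac{p_1}{2}=0$ is equivalent to $p_1=0$). This is the same framework as in the reference the paper invokes.

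However, what you label ``the heart of the argument'' is precisely where your proposal stops, and it is the entire content of the lemma. You observe that after solving the linear relations $J_\lambda$ at a vertex, the identity $\sum v_i^2=0$ becomes a quadratic relation among the $m-n$ remaining generators modulo $I_{SR}$, and you then gesture at ``bookkeeping'' or ``Hilbert series'' without carrying either through. But it is not at all automatic that such a relation forces $m-n\geq n$: the Stanley--Reisner ideal can kill many quadratic monomials, and the linear substitution from $J_\lambda$ can produce cancellations, so a naive dimension count of ``available basis classes versus square contributions'' does not close. In fact the argument that does work (Lemma 2.1 and the proof of Proposition 2.2 in the cited source \cite{MR3080634}, built on Hattori's results in \cite{zbMATH04011454}) runs through arithmetic constraints on the characteristic matrix $\lambda$ coming from the spin and string conditions at the $T$-fixed points, not through a direct degree-$4$ rank count in the face ring; in particular the spin condition $\sum v_i\equiv 0\pmod 2$, which you record but never use, enters essentially there. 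The paper's own ``proof'' is only a citation, so there is nothing internal to compare against; but as it stands your write-up is a plausible plan with the decisive step left open, not a proof, and the heuristic you offer for that step is not correct as stated.
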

\begin{proof}
  This follows directly from Lemma 2.1 and the proof of Proposition 2.2 in \cite{MR3080634} which was based on results in \cite{zbMATH04011454}.
\end{proof}

Next we have to recall some notations for Fano manifolds.
Let \(M\) be a complex manifold of complex dimension \(n\).

  Let \(\mathrm{Pic}(M)\) be the group of holomorphic line bundles over \(M\) modulo isomorphism.
  Then the first Chern class induces a homomorphism
  \[c_1:\mathrm{Pic}(M)\rightarrow H^2(M;\mathbb{Z}),\]
  see \cite[p. 139]{zbMATH00790015}.
  
  The Picard number \(\rho_M\) of \(M\) is defined as the rank of the image of the map \(c_1:\mathrm{Pic}(M)\rightarrow H^2(M;\mathbb{Z})\).
  
  It follows from the Kodaira vanishing theorem \cite[p. 159-160]{zbMATH00790015} that \(c_1:\mathrm{Pic}(M)\rightarrow H^2(M;\mathbb{Z})\) is an isomorphism if \(M\) is Fano.
  Hence, in this case the Picard number of \(M\) equals the second Betti number of \(M\), i.e.
  \begin{equation}
    \label{eq:2}
    \rho_M=b_2(M).
  \end{equation}

If \(M\) is a complex algebraic Fano variety one defines the pseudo-index of \(M\) as
\[\iota_M=\min\{K_M^*\cdot C;\; C\subset M \text{ rational curve}\}\in \mathbb{Z}_{>0}.\]

Here \(K_M^*=\wedge_{\mathbb{C}}^nTM\) is the anti-canonical line bundle of \(M\). Moreover, \(K_M^*\cdot C\in \mathbb{Z}\) denotes the intersection product of the divisor determined by a meromorphic section of \(K_M^*\) and \(C\).
Note that, by \cite[p. 141]{zbMATH00790015}, \[K_M^*\cdot C=\int_C c_1(K_M^*)\] is independent of the choice of the meromorphic section of \(K_M^*\).
Note, moreover, that 
\(K_M^*\cdot C > 0\) for every rational curve \(C\subset M\) since \(c_1(K^*_M)=c_1(M)\) is positive by the Fano condition.
  
  After recalling these basic facts about Fano manifolds we can state a version of the Mukai conjecture for smooth Fano varieties (for other versions of this conjecture see \cite{mukai88} and \cite{zbMATH02021087}).

  \begin{conjecture}[Mukai conjecture]
    Let \(M\) be a smooth Fano variety of complex dimension \(n\). Then we have
    \[\rho_M(\iota_M-1)\leq n\] with equality if and only if \(M\) is biholomorphic to \(\left(\mathbb{C} P^{\iota_M-1}\right)^{\rho_M}\).
  \end{conjecture}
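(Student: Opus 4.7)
The final statement is the Mukai conjecture, which remains open for general smooth Fano varieties but is known in the toric case by Casagrande. Since only the toric version is actually applied in the paper, I sketch a plan in that setting, broadly following Casagrande's approach via the combinatorics of fans together with toric Mori theory.

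The plan is to reformulate $\rho_M$ and $\iota_M$ in terms of the fan $\Sigma$ of $M$ and then induct on the Picard number. For a smooth toric Fano variety of complex dimension $n$ with primitive ray generators $v_1,\dots,v_m$, one has $\rho_M = m-n$. By a classical result of toric geometry the torus-invariant rational curves generate the Mori cone, so the pseudo-index $\iota_M$ equals the minimum of the anti-canonical degrees $K_M^{*}\cdot C_\tau$ over walls $\tau$, and each such degree is an explicit integer read off from the unique primitive wall relation
\[v + w + \sum_{j=1}^{n-1} b_j\, v_{i_j} = 0\]
among the generators of the two maximal cones adjacent to $\tau$, with $K_M^{*}\cdot C_\tau = 2 + \sum_j b_j > 0$ by the Fano condition.

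Next I would use toric Mori theory to induct on $\rho_M$. The base case $\rho_M = 1$ forces $M \cong \mathbb{C} P^n$, where $\iota_M = n+1$ and equality holds. For $\rho_M \geq 2$, I would pick an extremal ray of the Mori cone, generated by some $C_{\tau_0}$, and consider the associated contraction $\pi \colon M \to M'$. Both $M'$ and the general fiber $F$ of $\pi$ are again smooth toric Fano varieties, and using the combinatorial comparison via Batyrev's primitive collections one obtains $\rho_F + \rho_{M'} = \rho_M$ together with $\iota_F \geq \iota_M$ and $\iota_{M'} \geq \iota_M$. The inductive estimates $\rho_F(\iota_F - 1) \leq \dim F$ and $\rho_{M'}(\iota_{M'} - 1) \leq \dim M'$ then combine to yield the Mukai bound $\rho_M(\iota_M - 1) \leq n$.

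For the equality case, saturating these estimates forces each contraction in the inductive decomposition to be a $\mathbb{C} P^{\iota_M - 1}$-bundle with $\iota_F = \iota_M$, and a toric rigidity argument (or Ehresmann's theorem together with simple connectivity of the base) then shows the bundle is trivial. Iterating produces the desired splitting $M \cong (\mathbb{C} P^{\iota_M - 1})^{\rho_M}$. The hard part is controlling $\iota_M$ under contractions that are divisorial rather than of fiber type: one has to show that after contracting the exceptional locus of such a ray the pseudo-index does not drop, which requires a delicate case analysis of primitive collections and their interaction with wall relations. This is precisely where Casagrande's combinatorial work is concentrated, and is the main obstacle in carrying out the plan.
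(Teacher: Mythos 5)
There is a genuine gap here, but first a point of framing: the statement you are asked about is the Mukai conjecture itself, which the paper does not prove and does not claim to prove. It is open in general; the paper only \emph{cites} Casagrande's theorem for the toric case, which is the only case used later. So the honest comparison is between your sketch and Casagrande's published proof, and your sketch does not close the argument even in the toric setting: you explicitly defer ``controlling $\iota_M$ under contractions'' to ``Casagrande's combinatorial work,'' which is precisely the content of the theorem. A plan that outsources its main obstacle to the reference it is meant to replace is not a proof.

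Beyond that, the inductive scheme you propose has concrete problems. For a smooth projective toric variety, the contraction of an extremal ray need not have a smooth target, and neither the target $M'$ nor the general fiber $F$ need be Fano, so the inductive hypothesis does not apply to them as stated. The identity $\rho_F+\rho_{M'}=\rho_M$ also fails in general: a fiber-type elementary contraction only gives $\rho_{M'}=\rho_M-1$, with no control forcing $\rho_F=1$. And the inequalities $\iota_F\geq \iota_M$, $\iota_{M'}\geq \iota_M$ are exactly where pseudo-index arguments usually break down (curves on $M'$ need not lift to curves on $M$ of comparable anticanonical degree). Casagrande's actual argument is not this na\"ive induction over elementary contractions; it is a direct combinatorial analysis of the fan via Batyrev's primitive collections and relations, bounding the number of rays of the Fano polytope in terms of $\iota_M$, with the equality case identified combinatorially as the fan of $\left(\mathbb{C} P^{\iota_M-1}\right)^{\rho_M}$. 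The correct way to handle this statement in the context of the paper is simply to cite Casagrande for the toric case and record that the general case is conjectural, which is what the paper does.
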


  For toric Fano manifolds this conjecture has been shown by Casagrande \cite{MR2228683}, while it is still open in general.

  We also need the following simple lemma.

\begin{lemma}
\label{sec:proof-theor-refs-1}
  Let \(M\) be a smooth spin Fano variety. Then \(\iota_M\geq 2\).
\end{lemma}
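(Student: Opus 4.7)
The plan is to use the spin hypothesis to conclude that $c_1(M)$ is divisible by $2$ in $H^2(M;\mathbb{Z})$, and then to combine this with the Fano positivity of $c_1(M)$ recalled above to force $K_M^*\cdot C\geq 2$ for every rational curve $C\subset M$.

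The key input is the standard fact that for any almost complex manifold the second Stiefel--Whitney class is the mod~$2$ reduction of the first Chern class, i.e.\ $w_2(M)\equiv c_1(M) \pmod{2}$. Since $M$ is spin we have $w_2(M)=0$, so $c_1(M)$ lies in the kernel of the reduction map $H^2(M;\mathbb{Z})\rightarrow H^2(M;\mathbb{Z}/2)$. From the long exact sequence associated to the coefficient sequence $0\rightarrow\mathbb{Z}\xrightarrow{\cdot 2}\mathbb{Z}\rightarrow\mathbb{Z}/2\rightarrow 0$, this kernel is precisely $2\cdot H^2(M;\mathbb{Z})$, so we may write $c_1(M)=2\beta$ for some integral class $\beta\in H^2(M;\mathbb{Z})$. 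Then for any rational curve $C\subset M$ one has
\[K_M^*\cdot C \;=\; \int_C c_1(K_M^*)\;=\;\int_C c_1(M) \;=\; 2\int_C\beta \;\in\; 2\mathbb{Z},\]
and because $M$ is Fano this integer is strictly positive, hence at least~$2$. Taking the minimum over all rational curves yields $\iota_M\geq 2$, as claimed.

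There is no genuinely hard step here: the argument is just the familiar observation that the first Chern class of a spin complex manifold is divisible by~$2$, combined with the positivity of $c_1(M)$ that is already built into the Fano condition. The only small point to watch is that the passage from the mod~$2$ vanishing of $c_1(M)$ to integral divisibility $c_1(M)=2\beta$ is made via the coefficient long exact sequence rather than by any naive pointwise reading of the mod~$2$ congruence.
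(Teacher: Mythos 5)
Your proof is correct and follows essentially the same line of reasoning as the paper: spin forces $c_1(M)$ to be divisible by $2$, so every intersection number $K_M^*\cdot C$ is even, and Fano positivity then forces it to be at least $2$. The only cosmetic difference is that you realize the divisibility via the coefficient long exact sequence in $H^2(M;\mathbb{Z})$, whereas the paper produces a square-root line bundle $L$ with $K_M^*=L\otimes L$ in $\mathrm{Pic}(M)$; both amount to the same computation of $\int_C c_1(M)$.
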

\begin{proof}  
  By the spin condition, we have \[c_1(K^*_M)=c_1(M)\equiv w_2(M)=0\mod 2.\]
  Here \(w_2(M)\) denotes the second Stiefel--Whitney class of \(M\).
  Hence, there is \(L\in \mathrm{Pic}(M)\) such that \(K^*_M=L\otimes L\).
  Now for every rational curve \(C\subset M\) we have
  \[K_M^*\cdot C=2(L\cdot C)\in 2\mathbb{Z}.\]
  Hence the claim follows.
\end{proof}

Now we have all ingredients for the proof of  Theorem~\ref{sec:introduction}. Let \(M\) be a toric string Fano manifold of complex dimension \(n>0\). Then by a combination of Lemmas \ref{sec:proof-theor-refs} and \ref{sec:proof-theor-refs-1}, Equation~(\ref{eq:2}) and the Mukai conjecture we have
\[n\leq n(\iota_M-1)\leq b_2(M)(\iota_M-1)=\rho_M(\iota_M-1)\leq n.\]
Therefore we must have equality in all of the above inequalities.
Hence, the claim follows from the rigidity statement in the Mukai conjecture.

\section{The proof of Theorem~\ref{sec:introduction-1}}
\label{sec:proof-theor-refs-3}

In this section we prove Theorem~\ref{sec:introduction-1}.

Torus manifolds with invariant metrics of non-negative sectional curvature have been classified up to equivariant diffeomorphism in \cite{MR3355120}.
They all have finite fundamental groups by Lemma 7.1 in \cite{MR3355120}.
Since the Witten genus is multiplicative in finite coverings it suffices to prove the theorem in the case that \(M\) is a simply connected string torus manifold of positive dimension that admits an invariant metric of non-negative sectional curvature.
Then, by Theorem 1.2 of \cite{MR3355120}, \(M\) is equivariantly diffeomorphic to a quotient of a free linear action of an \(r\)-dimensional torus on a product of spheres
\[Z=\prod_{i=1}^r S^{2n_i+1}\times \prod_{i=1}^s S^{2m_i}\]
with \(\dim M=2n=2\sum_{i=1}^r n_i +2\sum_{i=1}^s m_i\), \(n_i>0\) and \(m_i>1\) for all \(i\).

It follows from the exact homotopy sequence for the fibration
\[T^r\rightarrow Z\rightarrow M\]
and the Hurewicz theorem that
\begin{equation}
  \label{eq:1}
  b_2(M)=\rank \pi_2(M)=r\leq n
\end{equation}
with equality if and only if \(s=0\) and \(n_i=1\) for all \(i\).

In the equality case \(M\) is a quasitoric manifold with orbit space diffeomorphic to the cube \([-1,1]^n\).
In this case it follows from \cite[Theorem 4.2]{shen22} that \(M\) is weakly equivariantly homeomorphic to a Bott manifold. Using \cite[Theorem 1.1]{MR4450678} one can upgrade this homeomorphism to a weakly equivariant diffeomorphism.
For \(n=1\), \(\mathbb{C} P^1\) is the only Bott manifold of real dimension \(2n\).
For \(n>1\), a \(2n\)-dimensional Bott manifold is the total space of a \(\mathbb{C} P^1\)-bundle with structure group \(S^1\) over a \((2n-2)\)-dimensional Bott manifold.
Since \(S^2=\mathbb{C}P^1\) equivariantly bounds an orientable \(S^1\)-manifold, it follows that every Bott manifold bounds an orientable manifold.
Hence, the claim follows in this case.

If we have strict inequality in (\ref{eq:1}) it follows from \cite[Theorem 3.2]{MR3599868} that the Witten genus of \(M\) vanishes.

%-----------------------------------------------------------------------
% End of amsart.template
%-----------------------------------------------------------------------

\bibliography{refs}{}
\bibliographystyle{alpha}
\end{document}